\title{A Local Relative Trace Formula for $F^{\times} \backslash SL_2(F)$}
\author{Jonathan Sparling}
\newtheorem{thm}{Theorem}[section]
\newtheorem{lemma}[thm]{Lemma}
\newtheorem{prop}[thm]{Proposition}
\theoremstyle{definition}
\newtheorem{definition}[thm]{Definition}
\theoremstyle{remark}
\begin{document}

\maketitle

\begin{abstract}
In this note, we derive explicitly the local relative trace formula for the symmetric space $F^{\times} \backslash SL_2(F)$ at the level of Lie algebras, where $F$ is a $p$-adic field of residue characteristic greater than two and characteristic zero. This is perhaps one of the simplest non-trivial analogs of the trace formula, and also a motivating example for the author's work (in preparation) on the relative trace formula.
\end{abstract}

\section{Introduction}

\label{1}

\subsection{Preamble}

The relative trace formula is a tool for studying harmonic analysis on reductive symmetric spaces in the same way that Arthur's trace formula is a tool for studying harmonic analysis on reductive groups.  It was introduced as a variant of the Kuznetsov trace formula (see \cite{CPS90}) and then studied further by Jacquet, Lai, and Rallis in the article \cite{JLR93}.  Ever since, it has generated a large amount of work, some of which is surveyed in \cite{Jac04}, but it is still far from completely developed.  The goal of this article is to provide an explicit motivating example for this extension of the local trace formula to symmetric spaces at the level of Lie algebras.   

More precisely, the local trace formula of Arthur (see \cite{Art91}) has already been used by Waldspurger in \cite{Wal95} to derive a local trace formula for a Lie algebra $\mathfrak{g}$.   This formula has the shape
\[ J(f_1, f_2) = J(\hat{f_1}, \check{f_2}) \]
where $J(\phi_1, \phi_2)$ is an expression written in terms of certain (orbital) integrals of test functions $\phi_1$ and $\phi_2$ that are locally constant and compactly supported on $\mathfrak{g}$; both sides of this identity have the same general form.  This formula provides a technically simpler analog of Arthur's local trace formula that nevertheless exhibits much of its structure.  For example, one encounters weighted orbital integrals on $\mathfrak{g}$ and essentially the same weights.   In addition, it is also a powerful tool in harmonic analysis, and already in the original article, \cite{Wal95}, Waldspurger applies this identity to extend earlier work of Harish-Chandra.  For these reasons, one would very much like to generalize this identity from reductive groups to reductive symmetric spaces.

The case of $F^{\times} \backslash SL_2(F)$ is interesting for two reasons:  first, it provides one of the simplest concrete analogs of the local trace formula, having only a handful of terms, each with fairly transparent structure;  second, it provides a template for the more general relative Lie algebra trace formula, in which one can already see the need for a second truncation procedure, which supplements Arthur's techniques.  One might reasonably expect valuable insight into the development of the relative trace formula on the group from this procedure.

We will work over a $p$-adic field $F$ of residue characteristic greater than two and  characteristic zero.  In order to state this special case, we need some notation.  Let $\mathcal{O}$ be the ring of integers of $F$, let $\nu_{\varpi}$ be a valuation of $F$, and fix a uniformizing element $\varpi$. Let $\mathcal{K}$ denote a set of representatives of the fourth-power classes of $F^{\times}$, which we assume to contain 1, and let $\mathcal{K}' := \mathcal{K} \cap (F^{\times})^2$.   Set
\[ \mathfrak{t}_{\gamma} := \left\{ \begin{pmatrix} & b\gamma \\ b & \end{pmatrix} : b \in F \right\} \]
and $T_{\gamma} = Z_G(\mathfrak{t}_{\gamma})$, the centralizer of $\mathfrak{t}_{\gamma}$ in $G$.  Let $A_{\gamma}$ be the $F$-split part of the torus $T_{\gamma}$.   Set
\[ \mathfrak{h} := \left\{ \begin{pmatrix} h & \\ & -h \end{pmatrix} : h \in F \right\} \text{ and }  \mathfrak{h}_{\epsilon} := \left\{ \begin{pmatrix} h & \\ & -h \end{pmatrix} : \nu_{\varpi}(h) \geq \nu_{\varpi}(\epsilon) \right\} \]
for a new truncation parameter in $F^{\times}$ that we call $\epsilon$.  The complement of $\mathfrak{h}_{\epsilon}$ in $\mathfrak{h}$ will be written $\mathfrak{h}_{\epsilon}^c$.   We will also write
\[ \tilde{f}(X) := \int_{K} \hat{f}(k^{-1}Xk) \, dk \]
for a Haar measure $dk$ on $K$, normalized so that $K$ has measure one.   

Here is the formula we will calculate:

\begin{prop}
Let $f$ be a locally constant, compactly supported function on the Lie algebra $\mathfrak{sl}_2$ of trace zero matrices.  Then there exists an $\epsilon \in F^{\times}$, depending on $f$, for which there is an integral identity,
\begin{align*}
	 \sum_{\gamma \in \mathcal{K} - \mathcal{K}'}  \frac{1}{|\mu_4(F)|} &\int_{\mathfrak{t}_{\gamma}}  |\gamma b|  \int_G  f\left(g^{-1} \begin{pmatrix} & b\gamma \\ b & \end{pmatrix} g \right)\, dg\, db \\ & \hspace{8ex} +   \frac{1}{2}  \int_{\mathfrak{t}_1} |b| \int_{A_1 \backslash G}  f\left(g^{-1} \begin{pmatrix} & b \\ b & \end{pmatrix} g \right)\,   (1 - 2 H_B(g) + 2 H_{\bar{B}}(g))\, dg \, db \\
& = \int_F  \tilde{f} \begin{pmatrix} 0 & 2n \\ & 0 \end{pmatrix} \, ( 1 + \nu_{\varpi}(n) - \nu_{\varpi}(\epsilon))\, dn + \frac{1}{1 - |\varpi|} \int_{\mathfrak{h}_{\epsilon}^c} \int_{H \backslash G}  \hat{f}(g^{-1}Xg)\, dg\, dX
\end{align*}
for measures $dg$ defined in terms of the Haar measures on $G$ and $A_1$, normalized so that $K$ and $A_1 \cap K$ respectively have measure one.   
\end{prop}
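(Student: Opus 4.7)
The plan is to compute a doubly-truncated regularization of the divergent $H$-period integral
$$ J(f) = \int_{H \backslash G} \int_{\mathfrak{s}^{\mathrm{reg}}} f(g^{-1} X g)\, dX\, dg, $$
where $\mathfrak{s}$ denotes the antidiagonal subspace of $\mathfrak{sl}_2$ (the $(-1)$-eigenspace of the involution whose fixed points are $H$), in two ways, following the template of Waldspurger's Lie-algebra trace formula. One truncation is Arthur's, producing the weights $H_B(g)$ and $H_{\bar{B}}(g)$ that become active on the split-torus piece of the geometric expansion; the other is the new $\epsilon$-truncation on the Fourier-dual side, which controls the boundary of the nilpotent cone. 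The stated identity will emerge by equating the two expansions and verifying that the result depends on $\epsilon$ only through the explicit $\nu_\varpi(\epsilon)$-dependence displayed in the nilpotent integrand on the right.

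For the geometric expansion I would apply the Weyl integration formula on the symmetric decomposition $\mathfrak{sl}_2 = \mathfrak{h} \oplus \mathfrak{s}$: up to $H$-conjugation, the regular part of $\mathfrak{s}$ is a disjoint union of $H$-translates of the $\mathfrak{t}_\gamma^{\mathrm{reg}}$ for $\gamma \in \mathcal{K}$, with Jacobian $|\gamma b|$ coming from the orbit-volume computation. For $\gamma \in \mathcal{K} - \mathcal{K}'$ the torus $T_\gamma$ is anisotropic, the resulting orbital integrals converge absolutely and no truncation is needed, yielding the first sum with the normalizing factor $|\mu_4(F)|^{-1}$ coming from the stabilizer of the parameterization of $\mathfrak{t}_\gamma$ by $b$. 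For $\gamma = 1$ the torus $T_1$ is split, Arthur's truncation becomes necessary, and a direct calculation identifies the resulting weight as $1 - 2H_B(g) + 2H_{\bar B}(g)$: the $1$ from the untruncated contribution and the $2H_B$, $2H_{\bar B}$ terms from the pair of Borel-characteristic truncations.

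For the dual side I would apply Fourier inversion to move $f \mapsto \hat f$, exploiting the orthogonality of $\mathfrak{s}$ and $\mathfrak{h}$ under the trace pairing to convert the inner $\mathfrak{s}$-integration into an integration of $\hat f$ against the transverse direction $\mathfrak{h}$. The portion of the resulting $\mathfrak{h}$-integral supported on $\mathfrak{h}_\epsilon^c$ is absolutely convergent and supplies the first right-hand-side term, with $(1-|\varpi|)^{-1}$ absorbing a geometric-series volume computation on $\mathfrak{h}_\epsilon^c$. The complementary piece on $\mathfrak{h}_\epsilon$ is handled by extracting the leading asymptotic of the orbital integral $\int_{H \backslash G} \hat f(g^{-1} Y g)\, dg$ as $Y = \mathrm{diag}(h,-h) \to 0$; this asymptotic is linear in $\nu_\varpi(h)$ and matches $\tilde f$ evaluated on the nilpotent $\begin{pmatrix} 0 & 2n \\ & 0\end{pmatrix}$, producing the weight $1 + \nu_\varpi(n) - \nu_\varpi(\epsilon)$ via the explicit volume of the truncated region together with the $K$-average defining $\tilde f$.

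The main obstacle will be the interplay of the two truncations. Although each sub-integral individually depends on its respective truncation parameter, the final identity must depend on $\epsilon$ only through the explicit $\nu_\varpi(\epsilon)$-term exhibited in the nilpotent integrand, and must be independent of the Arthur truncation. Establishing this requires tracking Iwasawa coordinates and the $K$-average defining $\tilde f$ against the supports of the $B$- and $\bar B$-truncation functions, and verifying that the cross-terms between the split-torus weighted orbital integral and the nilpotent asymptotic cancel exactly. The appearance of this second truncation --- necessitated because the Fourier-dual orbital integral fails to converge near the nilpotent cone, unlike in Waldspurger's formula for the full Lie algebra --- is the genuinely new feature of the $F^\times \backslash SL_2$ case and is exactly what the introduction flags as a template for the general relative trace formula.
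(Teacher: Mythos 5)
Your outline follows the same broad route as the paper (Plancherel duality between $\mathfrak{h}$ and $\mathfrak{h}^{\perp}$, Arthur-style truncation $\bar{\omega}(g,\mu)$ on $H\backslash G$, Weyl integration on the split side, an $\epsilon$-cutoff and nilpotent germ analysis on the dual side), but there is a genuine gap in how you propose to eliminate the Arthur truncation. Your ``direct calculation'' cannot identify the split-torus weight as $1-2H_B(g)+2H_{\bar B}(g)$: the weight there is the measure of $\{a\in A_1:\ \mathrm{Cartan}\,\tau(ag)\leq\mu\}$, and this volume grows linearly in $\mu$ --- in the paper it comes out (for $\mu$ large relative to $g$, after summing over the square classes via the elements $r_{\gamma}$) as $\tfrac{|\mu_4(F)|}{2}\bigl(1+2\mu-2H_B(g)+2H_{\bar B}(g)\bigr)$. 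Likewise the nilpotent weight on the dual side is $(1-|\varpi|)(\nu_{\varpi}(n)+\mu-\nu_{\varpi}(\epsilon)+1)$, again linear in $\mu$. So the truncated identity is \emph{not} independent of the Arthur truncation, and there is no cancellation of ``cross-terms'' to verify; the $\mu$-dependent parts of the two sides match each other, and that matching is itself a separate identity.

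The missing mechanism is the polynomial-in-$\mu$ argument. One must show (by dominated convergence, since the explicit weight formulas are only valid when $\mu$ exceeds a constant times $d(g)$, resp.\ $\nu_{\varpi}(\epsilon)-\nu_{\varpi}(n)$) that each truncated side differs from an explicit affine polynomial in $\mu$ --- $J_-(f,\mu)$ and $J_+(\hat f,\mu)$ --- by an error tending to $0$ as $\mu\to\infty$. Since the truncated sides are equal for every $\mu$, the two polynomials coincide identically. Comparing coefficients of $\mu$ yields an auxiliary identity (a Plancherel formula on $\mathfrak{t}_1$ relating the unweighted split-torus orbital integral to $\int_F\tilde f\begin{pmatrix}0&2n\\ &0\end{pmatrix}dn$), and only the \emph{constant} terms, after dividing by $1-|\varpi|$, give the stated proposition --- which also explains the $(1-|\varpi|)^{-1}$ in front of the $\mathfrak{h}_{\epsilon}^c$ term, rather than a volume computation on $\mathfrak{h}_{\epsilon}^c$ as you suggest. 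Without this asymptotic comparison and coefficient identification, your plan stalls exactly where the $\mu$-dependence refuses to disappear. (Your choice of $\epsilon$ is fine: it is fixed, as in the paper, so that $\tilde f\begin{pmatrix}h&2n\\ &-h\end{pmatrix}=\tilde f\begin{pmatrix}0&2n\\ &0\end{pmatrix}$ for $|h|\leq|\epsilon|$.)
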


This is the analog of the local trace formula for the symmetric space $F^{\times} \backslash SL_2(F)$ at the level of Lie algebras.  Notice that there appears on the right hand side an integral over the truncated set $\mathfrak{h}_{\epsilon}^c$.  This is because that particular integral would not converge if taken over the entire Lie algebra $\mathfrak{h}$.  Instead, we have had to remove the divergent part, and absorb it into the weight factor in the first term on the right hand side.  

Because the set of fixed points of the involution $\theta$ is so simple in this case, it is not very difficult to truncate the set $\mathfrak{h}$ correctly, so that it combines nicely into a weighted nilpotent integral.  In higher rank groups, however, this becomes significantly more demanding, although the author has succeeded in carrying out this procedure for $GL(n, F) \backslash Sp(2n, F)$, a family of symmetric spaces in which this difficulty is perhaps most fully manifest.

\subsection{Outline}

In section \hyperref[2]{2}, we set the stage for this formula more explicitly.  In section \hyperref[3]{3}, we pin down the first truncation procedure, which requires the choice of a compactly supported and locally constant function on the symmetric space.  In section \hyperref[4]{4} and section \hyperref[5]{5}, we develop one side of the trace formula and then the other.  But we would like to draw attention to section \hyperref[5]{5}, in which one sees for the first time a second truncation procedure which remedies a form of divergence that does not appear in the Lie algebra analog of the original local trace formula.  Finally, in section \hyperref[6]{6}, we will combine these developments into a pair of identities, one for the symmetric space $F^{\times} \backslash SL_2(F)$ and another that follows from the local relative trace formula on the symmetric space $\{ 1 \} \backslash F^{\times}$.   

\subsection{Acknowledgements}

The author is greatly indebted to Professor Robert E. Kottwitz for suggesting the relative trace formula as a research topic, and for his unparalleled support and guidance as a thesis advisor during the author's years at the University of Chicago.

\section{Context}

\label{2}

The $F$-points $G$ of the algebraic group $\mathbb{G} := SL_2$ can be written in the following way:
\[  SL_2(F) = \left\{ \begin{pmatrix} a & b \\ c & d \end{pmatrix} : ad - bc = 1 \right\} \subset GL_2(F). \]
The involution
\[ \theta := \text{Inn\,} \begin{pmatrix} 1 & 0 \\ 0 & -1 \end{pmatrix} \]
on $GL_2(F)$ preserves $SL_2(F)$ and is defined over $\mathcal{O}$.   As a result, it preserves the maximal compact subgroup $K := \mathbb{G}(\mathcal{O})$ and acts on the quotient $G / K$.   The fixed points of $\theta$ in $G$ will be written $H$.  For this involution, $H$ is the set of diagonal matrices with determinant 1:
\[ \left\{ \begin{pmatrix} a &  \\  & a^{-1} \end{pmatrix} \right\} \subset SL_2(F). \]
The abelian group $H$ is an $F$-split maximal torus of $SL_2(F)$.
 
Let $\mathfrak{g} = \mathfrak{sl}_2$ be the Lie algebra of $SL_2(F)$ and let $\mathfrak{h}$ be the Lie algebra of $H$, so that $\mathfrak{h}$ is a subalgebra of $\mathfrak{g}$.   The Killing form provides an orthogonal decomposition of $\mathfrak{g}$:
\[ \mathfrak{g} = \mathfrak{h} \oplus \mathfrak{h}^{\perp} \]
where $\mathfrak{h}^{\perp}$ is a vector subspace of $\mathfrak{g}$ that is not a subalgebra.    We can express this vector space explicitly in the following way:
\[ \mathfrak{h}^{\perp} = \left\{ \begin{pmatrix}  & b \\ c & \end{pmatrix} \right\} \subset SL_2(F). \]
Note that this set identifies in a natural way with the tangent space at the identity coset of the symmetric space $H \backslash G$.   

Let $f$ be a locally constant, compactly supported function on the Lie algebra $\mathfrak{g}$, and let $C_c^{\infty}(\mathfrak{g})$ denote the set of such functions.  The starting point for the relative Waldspurger trace formula is this variant of the Plancherel identity:
\[ \int_{\mathfrak{h}} \hat{f}(X)\, dX = \int_{\mathfrak{h}^{\perp}} f(X)\, dX. \]
Because the Fourier transform commutes with the coadjoint action of $G$ on $C_c^{\infty}(\mathfrak{g})$, we can adjust this identity so that each side is a function on $G$:
\[ \int_{\mathfrak{h}} \hat{f}(g^{-1}Xg)\, dX = \int_{\mathfrak{h}^{\perp}} f(g^{-1}Xg)\, dX. \]
The adjoint action of $H$ preserves $\mathfrak{h}$ and the Killing form, which means that it necessarily preserves $\mathfrak{h}^{\perp}$ as well.   We can therefore interpret each side of this identity as a function on the noncompact space $H \backslash G$.   After introducing a compactly supported function $\phi(g)$ to guarantee convergence, we can integrate both sides of this identity over $H \backslash G$ to obtain
\[ \int_{H \backslash G} \phi(g) \int_{\mathfrak{h}} \hat{f}(g^{-1}Xg)\, dX\, dg = \int_{H \backslash G} \phi(g) \int_{\mathfrak{h}^{\perp}} f(g^{-1}Xg)\, dX\, dg. \]
The bulk of the development of the trace formula consists in expressing both sides of this identity in terms of possibly weighted orbital integrals.   

We proceed in three stages.   First, we will choose the function $\phi(g)$ so that it correctly extends Arthur's truncation procedure.   Second, we will develop the right-hand side of this identity, using techniques of Arthur and Harish-Chandra.   Third, we will compute the left-hand side explicitly, by breaking up the integral over $\mathfrak{h}$ according to a second truncation procedure.

\section{The Choice of $\phi$}

\label{3}

Let $A$ denote the torus
\[ \left\{ \begin{pmatrix} a & b \\ b & a \end{pmatrix} : a^2 - b^2 = 1 \right\}.  \]
Because the characteristic polynomial of any element in this torus splits over $F$, this is an $F$-split torus of dimension 1.  Because $G$ has rank 1, this torus is also maximal.  Fix a Borel subgroup $B$ that contains $A$.   Then 
\[ X_*(A)_{dom} \cong \mathbb{Z}_{\geq 0} \]
where $X_*(A)_{dom}$ denotes the set of coweights of $A$ that are positive with respect to the Borel subgroup $B$.   In this context, we can define a map given by the Cartan decomposition:
\[ \text{Cartan\,} : G \twoheadrightarrow K \backslash G / K \cong X_*(A)_{dom} \cong \mathbb{Z}_{\geq 0}. \]
This map is proper, and so the preimage of the union of a finite number of positive integers is always a compact set.

We choose a positive integer $\mu$, which is called the truncation parameter.   Following Arthur's truncation procedure for his local trace formula, we associate to this number a truncation function
\[ \bar{\omega}(g, \mu) := \begin{cases} 1 & \text{Cartan\,} \theta(g)^{-1} g \leq \mu \\ 0 & \text{otherwise.} \end{cases} \]
A technical point is that the function
\[ \tau : g \mapsto \theta(g)^{-1}g \]
sends $H \backslash G$ into $G$ and is a closed immersion (as in \cite{Ric82}), which implies that $\bar{\omega}$ is compactly supported and locally constant as a function on the symmetric space $H \backslash G$.   It is this function that we use to truncate our trace formula: $\phi(g) = \bar{\omega}(g, \mu)$.

\section{The $\theta$-split Side}

\label{4}

\subsection{The Weyl Integration formula for $\mathfrak{h}^{\perp}$}

In this subsection, we provide an integration formula that replaces an integral over $\mathfrak{h}^{\perp}$ with an iterated integral: the first over the $H$-orbits of elements in $\mathfrak{h}^{\perp}$ and the second over a chosen set of representatives of these orbits.   

Almost every element of $\mathfrak{h}^{\perp}$ can be written in the following form:
\[ \begin{pmatrix} & a\gamma \\  a& \end{pmatrix}. \]
By conjugating this matrix by elements of $H$, we can adjust $\gamma$ so that it belongs to a chosen set of representatives $\mathcal{K}$ for the fourth-power-classes of $F^{\times}$.  In other words, off a set of measure 0, every element of $\mathfrak{h}^{\perp}$ is $H$-conjugate to a unique element in
\[ \mathfrak{t}_{\gamma} :=  \left\{ \begin{pmatrix} & a\gamma \\  a& \end{pmatrix} : a \in F \right\} \]
for some $\gamma \in \mathcal{K}$.  Let $\mu_4(F)$ be the set of fourth roots of unity in $F$.  Then the integral over $\mathfrak{h}^{\perp}$ is
\begin{align*}
  \int_{\mathfrak{h}^{\perp}} f(g^{-1}Xg)\, dX &= \int_{F^2} f\left(g^{-1} \begin{pmatrix} & b \\ c & \end{pmatrix} g \right) \, db\, dc \\ &= \sum_{\gamma \in \mathcal{K}}  \frac{1}{|\mu_4(F)|} \int_{F^2} |\gamma b| \, f \left(g^{-1} \begin{pmatrix} & ba^{-2} \gamma \\ ba^{2} & \end{pmatrix} g \right)\, \frac{da}{|a|}\, db \\
  &= \sum_{\gamma \in \mathcal{K}} \frac{1}{|\mu_4(F)|} \int_{F^2} |\gamma b| \, f \left(g^{-1} \begin{pmatrix} a & \\ &  a^{-1} \end{pmatrix}^{-1} \begin{pmatrix}  & b \gamma \\ b &  \end{pmatrix} \begin{pmatrix} a & \\ & a^{-1} \end{pmatrix} g \right)\, \frac{da}{|a|}\, db
\end{align*}
where we allow the integrand to be undefined on a set of measure 0.  This can be written in the following way:
\[  \int_{\mathfrak{h}^{\perp}} f(g^{-1}Xg)\, dX = \sum_{\gamma \in \mathcal{K}}  \frac{1 - |\varpi|}{|\mu_4(F)|} \int_{\mathfrak{t}_{\gamma}}  |\gamma b| \int_H f \left(g^{-1} h^{-1} \begin{pmatrix} & b \gamma \\ b & \end{pmatrix} h g \right)\,dh\, db \]
where
\[ dh = \frac{1}{1 - |\varpi|} \frac{da}{|a|} \]
has been normalized so that $H \cap K$ has measure one.   

Because this is a special case of a generalization of the Weyl integration formula to reductive symmetric spaces (see \cite{Hel78}), we will refer to this identity as the Weyl integration formula.   

\subsection{The General Expansion}

For the choice of truncation function made in section \hyperref[3]{3}, we will consider the expression
\[  \int_{H \backslash G} \bar{\omega}(g, \mu) \int_{\mathfrak{h}^{\perp}} f(g^{-1}Xg)\, dX\, dg \]
which we call the $\theta$-split side.    Our goal will be to develop this expression into a sum of (orbital) integrals, each integrated over certain Cartan subalgebras of $\mathfrak{g}$ that contain only elements in $\mathfrak{h}^{\perp}$.

Expanding the integral over $\mathfrak{h}^{\perp}$ with the Weyl integration formula, introducing the truncation function $\bar{\omega}(g, \mu)$, and integrating this expression over $H \backslash G$ yields another expression for the $\theta$-split side:
\begin{align*}
	\sum_{\gamma \in \mathcal{K}}\frac{1 - | \varpi |}{|\mu_4(F)|}  \int_{\mathfrak{t}_{\gamma}}   |\gamma b| &\int_{G}   f \left(g^{-1}  \begin{pmatrix} & b \gamma \\ b & \end{pmatrix} g\right)\, \bar{\omega}(g, \mu) \, dg \, db \\
		&= \sum_{\gamma \in \mathcal{K}} \frac{1 - | \varpi |}{|\mu_4(F)|}  \int_{\mathfrak{t}_{\gamma}} |\gamma b| \int_{A_{\gamma} \backslash G}  f \left(g^{-1}  \begin{pmatrix} & b \gamma \\ b & \end{pmatrix} g\right)\, \int_{A_{\gamma}} \bar{ \omega}(ag, \mu) \, da\, dg \, db
\end{align*}
where $A_{\gamma}$ is the maximal $F$-split subtorus of $T_{\gamma} := Z_G(\mathfrak{t}_{\gamma})$.   

\begin{definition}
The inner integral
\[ \omega_{\gamma}(g, \mu) := \int_{A_{\gamma}} \bar{\omega}(ag, \mu) \, da = \text{meas\,}_{A_{\gamma}} \{ a : \text{Cartan\,} \tau(ag) \leq \mu \} \]
is called the weight factor (associated to $\gamma$).   
\end{definition}

Notice that when $\gamma$ is not a square, the torus $T_{\gamma}$ contains a discrete set of $F$-split elements, and so $A_{\gamma}$ is trivial.   In this case, the weight factor associated to $\gamma$ is simply $\bar{\omega}(g, \mu)$.    When $\gamma$ is a square, however, these weight factors are slightly more complicated.

Suppose that $\gamma$ belongs to $\mathcal{K}' = \mathcal{K} \cap (F^{\times})^2$.   We start by relating $A_{\gamma}$ to the maximal $F$-split torus $A = A_1$, where $1$ represents the trivial fourth-power-class of $F^{\times}$.  If $\sqrt{\gamma}$ represents an element of $\mathcal{K}$ whose square maps to the same element as $\gamma$ in $F^{\times} / (F^{\times})^4$, then we can set
\begin{align*}
	r_{\gamma} := \begin{pmatrix} \frac{1+\sqrt{\gamma}}{2} & \frac{\sqrt{\gamma}-1}{2} \\ \frac{\sqrt{\gamma} - 1}{2\sqrt{\gamma}} & \frac{1 + \sqrt{\gamma}}{2\sqrt{\gamma}} \end{pmatrix}
\end{align*}
so that
\begin{align*}
	r_{\gamma}^{-1} A_{\gamma} r_{\gamma} = A
\end{align*}
and 
\begin{align*}
	\tau(r_{\gamma}) = r_{\gamma}^{-1} \theta(r_{\gamma}) = \begin{pmatrix} \frac{\gamma + 1}{2\sqrt{\gamma}} & \frac{1 - \gamma}{2\sqrt{\gamma}} \\ \frac{1 - \gamma}{2\sqrt{\gamma}} & \frac{\gamma + 1}{2 \sqrt{\gamma}}\end{pmatrix} \in A
\end{align*}
which is sent to the element $\pm \nu_{\varpi}(\sqrt{\gamma})$ by the map $A \rightarrow X_*(A)_{dom} \cong \mathbb{Z}_{+}$.    

We consider the following part of the $\theta$-split side:
\begin{align*}
 \sum_{\gamma \in \mathcal{K}'}\frac{1-|\varpi|}{|\mu_4(F)|}  \int_{\mathfrak{t}_{\gamma}} |\gamma b|   \int_{A_{\gamma} \backslash G}  f \left(g^{-1} \begin{pmatrix} & b \gamma \\ b & \end{pmatrix} g\right)\, \omega_{\gamma}(g, \mu)\, dg\, db.
\end{align*}
The elements $r_{\gamma}$ allow us to change variables so that this expression becomes
\begin{align*}
	 	&\frac{1-|\varpi|}{|\mu_4(F)|}  \int_{\mathfrak{t}_1} | b | \int_{A \backslash G}  f \left(g^{-1} \begin{pmatrix} & b \\ b & \end{pmatrix} g\right)\, \left(  \sum_{\gamma \in \mathcal{K}'}  \, \omega_{\gamma}(r_{\gamma}g, \mu)\right) \, dg\, db
\end{align*}
(we have also changed variables by the map $b \mapsto b \gamma^{-\frac{1}{2}}$). The inner sum acts as a new weight factor, and we may write it in the following way:
\begin{align*}
	\sum_{\gamma \in \mathcal{K}'}  \omega_{\gamma}(r_{\gamma}g, \mu) &:= \sum_{\gamma \in \mathcal{K}'} \int_{A_{\gamma}} \bar{\omega}(ar_{\gamma}g, \mu) \, da = \sum_{\gamma \in \mathcal{K}'}  \int_A \bar{\omega}(r_{\gamma}ag, \mu)\, da.
\end{align*}
Our goal for the remainder of this section will be to compute this sum explicitly. 

\subsection{Computation of the Weight Factor}

Recall that $\bar{\omega}(r_{\gamma}ag, \mu)$ is defined by the inequality $\text{Cartan\,} \tau(r_{\gamma}ag) \leq \mu$.   This condition can be clarified by appealing to a special case of Arthur's key geometric lemma, provided that $\mu$ is bigger than some integer that depends on $g$.  

We first require some notation.   Let $g = tnk$ be the Iwasawa decomposition of $g$, where $t$ is in $A \cong F^{\times}$, $n$ is in the unipotent radical $N \cong F$ of $B$, and $k$ is in $K$.   Recall the function $H_B$ on $G$, which is defined to be $H_B(g) := \nu_{\varpi}(t)$.    We will also set $d(g) := \text{max\,} \{ | H_B(g) |, |H_{\bar{B}}(g)| \}$.
\pagebreak
\begin{lemma}
There is a constant $c$ such that when $\mu > c\, d(g)$, the inequality
\[ \text{Cartan\,} \,\tau(r_{\gamma}ag) \leq \mu \] is equivalent to 
\[ - 2 H_{\bar{B}}(g) -\mu \leq \nu_{\varpi}(\sqrt{\gamma}) + 2 \nu_a  \leq \mu - 2 H_B(g). \]
\end{lemma}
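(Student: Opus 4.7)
The strategy is to reduce the Cartan inequality, via the Iwasawa decomposition of $g$, to an entry-wise valuation estimate on an explicit $2\times 2$ matrix.  First, writing $g=tnk$ with $t\in A$, $n\in N$, $k\in K$, I would exploit three properties of the involution: $\theta$ acts by inversion on the $\theta$-split torus $A$, sends $N$ to $\bar N$, and preserves $K$.  Combining these with the formula $\tau(r_\gamma)=s_\gamma\in A$ from the preceding discussion, a short manipulation produces
\[
\tau(r_\gamma ag)=\theta(k)^{-1}\,\theta(n)^{-1}\,h\,n\,k,\qquad h:=(at)^2 s_\gamma^{-1}\in A,
\]
and bi-$K$-invariance of the Cartan map (together with $\theta(k)\in K$) reduces the question to understanding $\operatorname{Cartan}(\theta(n)^{-1}h\,n)$.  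Upon identifying $A\cong F^{\times}$, the valuation of $h$ works out (after fixing the sign of $s_\gamma$) to $\nu_\varpi(\sqrt\gamma)+2\nu_a+2H_B(g)$.

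Next, I would conjugate everything by $r_0:=\bigl(\begin{smallmatrix}1 & 1 \\ 1 & -1\end{smallmatrix}\bigr)\in GL_2(\mathcal O)$, which normalizes $K$ (since $2\in\mathcal O^{\times}$) and makes $A$ the diagonal torus and $N$ the upper triangular unipotent subgroup, so that $\theta$ becomes conjugation by the Weyl element $\bigl(\begin{smallmatrix}0 & 1 \\ 1 & 0\end{smallmatrix}\bigr)$.  Writing $h=\operatorname{diag}(u,u^{-1})$ and $n=\bigl(\begin{smallmatrix}1 & x \\ 0 & 1\end{smallmatrix}\bigr)$ in these coordinates, a direct multiplication gives
\[
\theta(n)^{-1}h\,n=\begin{pmatrix}u & ux \\ -ux & u^{-1}-ux^2\end{pmatrix}.
\]
Using the identity $\operatorname{Cartan}(y)=-\min_{i,j}\nu_\varpi(y_{ij})$ valid on $SL_2(F)$, the condition $\operatorname{Cartan}\leq\mu$ amounts to every entry having valuation $\geq-\mu$.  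Setting $m:=\nu_\varpi(u)$ and $p:=\nu_\varpi(x)$, this simplifies to $m\leq\mu$ together with $m\geq\max(-\mu,-\mu-2p)$, at least off the thin locus where the $(2,2)$-entry suffers nongeneric cancellation.

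To match the form stated in the lemma, I need to identify $\max(0,-p)$ with $H_B(g)-H_{\bar B}(g)$.  When $p\geq 0$, $n$ itself lies in $K$, and $H_{\bar B}(g)=H_B(g)$; when $p<0$, a short computation of the $\bar B$-Iwasawa decomposition of $n$ shows that $H_{\bar B}(g)=H_B(g)+p$.  In both cases $\max(0,-p)=H_B(g)-H_{\bar B}(g)$.  Substituting this relation along with the formula for $\nu_\varpi(h)$ transforms the bounds $m\leq\mu$ and $m\geq-\mu+2(H_B(g)-H_{\bar B}(g))$ into precisely
\[
-2H_{\bar B}(g)-\mu\leq\nu_\varpi(\sqrt\gamma)+2\nu_a\leq\mu-2H_B(g).
\]

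The main obstacle is the cancellation locus of the $(2,2)$-entry $u^{-1}-ux^2$, on which the actual Cartan invariant can strictly drop below the generic value $\max(m,-m-2p)$.  Such a cancellation is only possible along the thin set $2m+2p=0$, i.e.\ $m=-p$, and it lowers the Cartan invariant only in the subcase $p<0$, $m>0$; there $|m|=|p|=H_B(g)-H_{\bar B}(g)\leq 2d(g)$.  Choosing the constant $c$ in the hypothesis $\mu>c\,d(g)$ to be at least $2$ therefore forces $\mu>|m|$ throughout this locus, so that both the inequalities of the lemma and the actual Cartan bound hold automatically there.  This reconciles the generic entry-wise analysis with the cancellation regime and establishes the equivalence for every $a\in A$.
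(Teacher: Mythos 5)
Your argument is correct and follows essentially the same route as the paper: reduce via the Iwasawa decomposition and bi-$K$-invariance of the Cartan map to $\operatorname{Cartan}(\theta(n)^{-1}hn)$ with $h \in A$, pass to coordinates where $A$ is diagonal, and compare the entry valuations of the resulting $2\times 2$ matrix with $\mu$, the constant $c$ absorbing the terms bounded by $d(g)$. The only difference is organizational: you keep a single $B$-Iwasawa decomposition and handle the possible cancellation in the $(2,2)$-entry directly (noting it forces $|m| \le 2d(g) < \mu$, so both sides of the equivalence hold automatically there), whereas the paper splits into the cases $\nu_\varpi(s) \gtrless 0$ and switches to the $\bar{B}$-Iwasawa decomposition in the second case.
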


\begin{proof}
First write
\begin{align*}
   \text{Cartan\,} \tau(r_{\gamma}ag) &= \text{Cartan\,} \left( \theta(n)^{-1} \tau(r_{\gamma}) a^2 t^2 n \right).
\end{align*}
This is easier to compute if we temporarily change basis so that $A$ is the set of diagonal matrices.   Write
\[ \tau(r_{\gamma}) a^2 = \begin{pmatrix} s & \\ & s^{-1} \end{pmatrix} \text{ and } t^2 = \begin{pmatrix} r & \\ & r^{-1}\end{pmatrix} \]
and suppose that $\nu_{\varpi}(s)$ is positive.  Then
\begin{align*}
 \text{Cartan\,}  \left( \theta(n)^{-1} \tau(r_{\gamma}) a^2 t^2 n \right) &= \text{Cartan\,} \begin{pmatrix} 1 &  \\ -n & 1 \end{pmatrix} \begin{pmatrix} sr & \\ & (sr)^{-1} \end{pmatrix}\begin{pmatrix} 1 & n \\ & 1 \end{pmatrix} \\
 &=  \text{Cartan\,} \begin{pmatrix} sr & srn \\ -srn & (sr)^{-1} - srn^2  \end{pmatrix} \\
 &= - \text{min\,} \{ \nu_{\varpi} (sr),  \nu_{\varpi}(srn),    \nu_{\varpi}((sr)^{-1} - srn^2) \}. 
\end{align*}
Because there exists a constant $c$ for which $ \text{min\,} \{ \nu_{\varpi} (r), \nu_{\varpi}(rn^2) \} > - c\, d(g)$, if we ensure that $\mu > c\ d(g)$, then we can simplify this condition to a simpler inequality:
\[ - \nu_{\varpi}((sr)^{-1}) = - \text{min\,} \{ \nu_{\varpi} (sr),  \nu_{\varpi}(srn),    \nu_{\varpi}((sr)^{-1} - srn^2) \} \leq \mu. \]
With the original notation, the condition $\text{Cartan\,} \,\tau(r_{\gamma}ag) \leq \mu$ may therefore be written
\[ \nu_{\varpi}(\sqrt{\gamma}) + 2 \nu_a  \leq \mu - 2 H_B(g) \]
in this case.

Now suppose that $\nu_{\varpi}(s)$ is negative.   We may carry out the same computation, using the Iwasawa decomposition $g = tnk$ for $n$ belonging to $\bar{N}$, the opposite unipotent radical of $N$.   In this case, $\nu_{\varpi}(r) = H_{\bar{B}}(g)$ rather than $H_B(g)$.   Computing in a similar fashion,
\begin{align*}
 \text{Cartan\,}  \left( \theta(n)^{-1} \tau(r_{\gamma}) a^2 t^2 n \right) &= \text{Cartan\,} \begin{pmatrix} 1 & -n  \\  & 1 \end{pmatrix} \begin{pmatrix} sr & \\ & (sr)^{-1} \end{pmatrix}\begin{pmatrix} 1 &  \\n & 1 \end{pmatrix} \\
&= \text{Cartan\,} \begin{pmatrix} sr - n^2(sr)^{-1} & -n(sr)^{-1} \\  n(sr)^{-1}  & (sr)^{-1} \end{pmatrix} \\
&= -\text{min\,} \{ \nu_{\varpi}( (sr)^{-1}), \nu_{\varpi}( n(sr)^{-1}), \nu_{\varpi} (sr - n^2(sr)^{-1})\}.
 \end{align*}
Because $\text{max\,} \{ \nu_{\varpi} (r), \nu_{\varpi}(rn^{-2}) \} < c\, d(g)$ for some $c$, which we may assume to equal the previous constant $c$, this expression is strictly less than $\mu$ precisely when 
\[  -2 H_{\bar{B}}(g) -\mu \leq \nu_{\varpi}(\sqrt{\gamma}) + 2 \nu_a \]
provided that $\mu > c\, d(g)$.  Combining these inequalities completes the proof of the lemma.
\end{proof}

In other words, this lemma implies that if $A$ is equipped with the Haar measure that assigns $A(\mathcal{O}) = A \cap K$ a measure of one, then
\begin{align*}
	\int_A \bar{\omega}(r_{\gamma}ag, \mu)\, da &=  \# \{ \nu \in \mathbb{Z} : - 2 H_{\bar{B}}(g) - \mu \leq \nu_{\varpi}(\sqrt{\gamma}) + 2 \nu \leq \mu - 2 H_B(g) \}
\end{align*}
when $\mu$ is sufficiently large in a sense that depends on $g$.  We would like to sum this right hand side over the elements of $\mathcal{K}'$.

Because $F$ is a $p$-adic field, we can assume that $\mathcal{K}'$ has been chosen to contain $| \mu_4(F) |$ elements, one half of them having valuation zero and the other half having valuation two.   Then 
\begin{align*}
	\sum_{\gamma \in \mathcal{K}'} \, \omega_{\gamma}(r_{\gamma}g, \mu) &= \frac{| \mu_4(F)|}{2} \, \# \{ \nu : - 2 H_{\bar{B}}(g) - \mu \leq 2 \nu \leq \mu - 2 H_B(g) \} \\
	& \hspace{10ex} +  \frac{|\mu_4(F)|}{2}   \,  \# \{ \nu : - 2 H_{\bar{B}}(g) - \mu \leq 1+ 2 \nu \leq \mu - 2 H_B(g) \}\\
	&=  \frac{|\mu_4(F)|}{2}  (1 + 2 \mu - 2 H_B(g) + 2 H_{\bar{B}}(g)).
\end{align*}
This expression is only valid when $\mu$ is sufficiently large, but we may nevertheless use it to form an approximation for the $\theta$-split side.

\begin{definition}
Set $J_{-}(f, \mu)$ equal to the expression
\begin{align*}
	\frac{1 - |\varpi|}{2}  \int_{\mathfrak{t}_1} |b| \int_{A_1 \backslash G}&  f\left(g^{-1}\begin{pmatrix} & b \\ b & \end{pmatrix} g\right)\,  \left( 1 + 2 \mu - 2 H_B(g) + 2 H_{\bar{B}}(g) \right) \, dg\, db \\
	 & +  \sum_{\gamma \in \mathcal{K} - \mathcal{K}'} \frac{1 - |\varpi|}{|\mu_4(F)|} \int_{\mathfrak{t}_{\gamma}} |\gamma b | \int_G  f\left(g^{-1}\begin{pmatrix} & b \gamma \\ b & \end{pmatrix} g\right)\, dg\, db.  
\end{align*}
\end{definition}

We prove next that the difference between this approximation and the original expression tends to 0 as $\mu$ approaches infinity, so that this is the ``best fit" polynomial in $\mu$ to the $\theta$-split side near $\mu = \infty$.

\begin{prop}
For any locally constant, compactly supported function $f$ on the Lie algebra $\mathfrak{g}$,
\[ \lim_{\mu \rightarrow \infty} \left( \text{$\theta$-split side} - J_{-}(f, \mu) \right) = 0. \]
\end{prop}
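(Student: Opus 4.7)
The plan is to split the difference $(\theta\text{-split side}) - J_{-}(f,\mu)$ into the contribution from non-square $\gamma \in \mathcal{K} - \mathcal{K}'$ and the contribution from square $\gamma \in \mathcal{K}'$, and to apply dominated convergence to each piece.

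For non-square $\gamma$, the torus $T_\gamma$ is non-split, hence anisotropic and compact, so $\int_G f(g^{-1}\tau g)\,dg$ factors as $\mathrm{vol}(T_\gamma)\int_{T_\gamma\backslash G}f(g^{-1}\tau g)\,dg$, which is absolutely convergent and compactly supported in $b$. Since $J_{-}$ simply drops the truncation factor here, the contribution to the difference is
\[ \sum_{\gamma\in\mathcal{K}-\mathcal{K}'}\frac{1-|\varpi|}{|\mu_4(F)|}\int_{\mathfrak{t}_\gamma}|\gamma b|\int_G f\!\left(g^{-1}\begin{pmatrix}&b\gamma\\b&\end{pmatrix}g\right)\bigl(\bar\omega(g,\mu)-1\bigr)\,dg\,db. \]
Because $\bar\omega(g,\mu)\to 1$ pointwise as $\mu\to\infty$ and $|\bar\omega-1|\le 1$, the integrand is dominated by $|f(g^{-1}\tau g)|$ and tends to zero pointwise, so dominated convergence yields the limit zero.

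For square $\gamma$, after the $r_\gamma$-change of variables performed earlier in Section~\hyperref[4]{4}, the difference becomes
\[ \frac{1-|\varpi|}{|\mu_4(F)|}\int_{\mathfrak{t}_1}|b|\int_{A_1\backslash G} f\!\left(g^{-1}\begin{pmatrix}&b\\b&\end{pmatrix}g\right) E(g,\mu)\,dg\,db, \]
where $E(g,\mu):=\sum_{\gamma\in\mathcal{K}'}\omega_\gamma(r_\gamma g,\mu)-\tfrac{|\mu_4(F)|}{2}\bigl(1+2\mu-2H_B(g)+2H_{\bar B}(g)\bigr)$. Lemma~4.1 shows that $E(g,\mu)=0$ once $\mu>c\,d(g)$, so $E(g,\mu)\to 0$ pointwise. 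On the complementary set $\mu\le c\,d(g)$, the exact weight $\omega_\gamma(r_\gamma g,\mu)$ is a count of lattice points in an interval of length $O(\mu+d(g))$, and the same estimate holds for the polynomial approximation; hence both summands are $O(d(g))$ and we obtain the uniform bound $|E(g,\mu)|\le C\bigl(1+d(g)\bigr)$ independently of $\mu$.

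To invoke dominated convergence it remains to verify finiteness of the dominating integral
\[ \int_{\mathfrak{t}_1}|b|\int_{A_1\backslash G}\Bigl|f\!\left(g^{-1}\begin{pmatrix}&b\\b&\end{pmatrix}g\right)\Bigr|\bigl(1+d(g)\bigr)\,dg\,db. \]
This is the split-torus weighted orbital integral on $\mathfrak{sl}_2$, whose convergence is classical (Harish-Chandra, Arthur) and can also be checked directly: using the Iwasawa identification $A_1\backslash G\cong N\times K$ and diagonalizing the central element, the compact support of $f$ constrains the unipotent variable $n_x$ to $|x|\le M|b|^{-1}$, on which $d(g)=O(\nu_\varpi(|b|^{-1}))$, while the prefactor $|b|$ absorbs the growth of volume as $b\to 0$. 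This integrability estimate is the principal technical obstacle; once it is in place, dominated convergence completes the proof.
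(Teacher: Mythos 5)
Your proof is correct and follows essentially the same route as the paper: split the difference into the non-square and square contributions, bound each by a $\mu$-independent (respectively unweighted and $d(g)$-weighted) orbital integral, observe pointwise vanishing once $\mu > c\,d(g)$, and conclude by dominated convergence. The only substantive difference is cosmetic: where the paper cites Harish-Chandra and Waldspurger for integrability of the dominating (weighted) orbital integrals, you sketch a direct Iwasawa-coordinate verification, which is a reasonable self-contained alternative.
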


\begin{proof}
When $\gamma$ is not a square, the magnitude of the difference
\[ \left| |b|  \int_{A_{\gamma} \backslash G}  f\left(g^{-1}\begin{pmatrix} & b\gamma \\ b & \end{pmatrix} g\right) \, \left( 1 - \omega_{\gamma}(g, \mu) \right) \, dg\, db \right| \leq  |b|  \int_{A_{\gamma} \backslash G} \left| f\left(g^{-1}\begin{pmatrix} & b\gamma \\ b & \end{pmatrix} g\right) \right| \, dg \, db \]
which is a (normalized) orbital integral.  This bound is locally integrable and compactly supported on the Cartan subalgebra $\mathfrak{t}_{\gamma}$ by results of Harish-Chandra, given for $p$-adic reductive Lie algebras in \cite{HC70}.  This absolutely summable bound does not depend on $\mu$.   

When $\gamma$ is a square, one first observes that when $\mu > c \, d(g) $,
\[ \omega_{\gamma}(g, \mu)  = \frac{|\mu_4(F)|}{2} (1 + 2 \mu - 2 H_B(g) + 2 H_{\bar{B}}(g))  \]
and so we can bound the magnitude of the difference
\[ |b|  \int_{A_{\gamma} \backslash G}  f\left(g^{-1}\begin{pmatrix} & b\gamma \\ b & \end{pmatrix} g\right) \, \left( \frac{|\mu_4(F)|}{2} (1 + 2 \mu- 2 H_B(g) + 2 H_{\bar{B}}(g)) - \omega_{\gamma}(g, \mu) \right) \, dg\, db\]
by a weighted (and normalized) orbital integral that is absolutely summable by a general theorem of Waldspurger in \cite{Wal95}:
\[ |b|  \int_{A_{\gamma} \backslash G} \left| f\left(g^{-1}\begin{pmatrix} & b\gamma \\ b & \end{pmatrix} g\right) \right| \,   \frac{|\mu_4(F)|}{2} (1 + (2 c + 4) d(g) ) \,  dg\, db. \]
This expression is also independent of $\mu$.

That both these differences are bounded by absolutely summable functions that do not depend on $\mu$ means that we can use the Lebesgue dominated convergence theorem to study their behavior as $\mu$ tends to infinity.  Specifically, the limit of the difference between the $\theta$-split side and this approximation is a linear combination of integrals of
\[ |b|  \int_{A_{\gamma} \backslash G}  f\left(g^{-1}\begin{pmatrix} & b\gamma \\ b & \end{pmatrix} g\right) \, \lim_{\mu \rightarrow \infty} \left( \omega_{\gamma}(g, \mu) -  \frac{|\mu_4(F)|}{2}(1 + 2 \mu- 2 H_B(g) + 2 H_{\bar{B}}(g))  \right) \, dg\, db = 0 \]
or
\[ |b|  \int_{A_{\gamma} \backslash G}  f\left(g^{-1}\begin{pmatrix} & b\gamma \\ b & \end{pmatrix} g\right) \, \lim_{\mu \rightarrow \infty} \left( 1 - \omega_{\gamma}(g, \mu) \right) \, dg\, db = 0 \]
which implies that the entire limit equals 0.
\end{proof}

\section{The $\theta$-fixed Side}

\label{5}

We now consider the left-hand side of the trace formula,
\[  \int_{H \backslash G} \bar{\omega}(g, \mu) \int_{\mathfrak{h}} \hat{f}(g^{-1}Xg)\, dX\, dg =   \int_{\mathfrak{h}} \int_{H \backslash G}  \hat{f}(g^{-1}Xg)\, \bar{\omega}(g, \mu)\, dg\, dX  \]
which we call the $\theta$-fixed side.  Unfortunately, this side is not amenable to Arthur's techniques: the inner integral would have to be replaced by an unweighted orbital integral which is not locally integrable on $\mathfrak{h}$ near the origin.   But in the case of $SL_2(F)$, the rank one symplectic group, it is simple enough that we may just compute an expansion directly.   This computation does not, however, generalize to higher rank symplectic groups in a straightforward way.

First, we divide $\mathfrak{h}$ into two sets, which depend on the choice of a second truncation parameter $\epsilon \in F^{\times}$.
\begin{align*}
	\mathfrak{h}_{\epsilon} &:= \left\{ \begin{pmatrix} x & \\ & -x \end{pmatrix} : | x | \leq | \epsilon | \right\} \text{ and } \mathfrak{h}^c_{\epsilon} := \{ X \in \mathfrak{h} : X \notin \mathfrak{h}_{\epsilon} \}.
\end{align*} 
The $\theta$-fixed side can then be broken into two terms:
\[ \int_{\mathfrak{h}_{\epsilon}} \int_{H \backslash G}  \hat{f}(g^{-1}Xg)\, \bar{\omega}(g, \mu)\, dg\, dX  + \int_{\mathfrak{h}^c_{\epsilon}} \int_{H \backslash G}  \hat{f}(g^{-1}Xg)\, \bar{\omega}(g, \mu)\, dg\, dX. \]
For the second term, $\bar{\omega}(g, \mu)$ itself will act as the weight factor.   For the first term, we must derive an ad hoc germ expansion. 

Because $H$ is a maximal torus in $G = SL_2(F)$, we can apply the Iwasawa decomposition to write $G = HN'K$, where $N'$ is the unipotent radical of a Borel subgroup $B'$ that contains $H$.  If we set
\[ \tilde{f}(X) := \int_K \hat{f}(k^{-1}Xk)\, dX \]
then the first term can be written
\[ \int_{\mathfrak{h}_{\epsilon}} \int_{N'}  \tilde{f}(n^{-1}Xn)\, \bar{\omega} \left( n, \mu\right)\, dn\, dX \]
because $\bar{\omega}$ is right $K$-invariant and left $H$-invariant.   In coordinates, this equals
\[ \int_{\{h \in F : | h | \leq| \epsilon| \}} \int_F  \tilde{f} \left(\begin{pmatrix} 1 & n \\ & 1 \end{pmatrix}^{-1} \begin{pmatrix} h & \\ & -h \end{pmatrix} \begin{pmatrix} 1 & n \\ & 1 \end{pmatrix} \right)\, \bar{\omega}(n, \mu)\, dn\, dh \]
where the measures $dn$ and $dh$ are both the standard additive measures on $F$.   This integral can be simplified to
\begin{align*}
 \int_{\{h \in F : | h | \leq | \epsilon| \}} \int_F  \tilde{f} \begin{pmatrix} h & 2hn \\ & -h \end{pmatrix} \, \bar{\omega}\left( n, \mu\right) \, dn\, dh
\end{align*}
or
\begin{align*}
 \int_{\{h \in F : | h | \leq |\epsilon| \}} \int_F  \tilde{f} \begin{pmatrix} h & 2n \\ & -h \end{pmatrix} \, \bar{\omega}\left( \begin{pmatrix} 1 & \frac{n}{h} \\ & 1 \end{pmatrix}, \mu\right)
 \, \frac{dn}{|h|}\, dh.
\end{align*}
The key point is that because $\tilde{f}$ is compactly supported, we can choose $|\epsilon|$ so small that
\[ \tilde{f} \begin{pmatrix} h & 2n \\ & -h \end{pmatrix} =  \tilde{f} \begin{pmatrix}0 & 2n \\ & 0 \end{pmatrix} \]
for all $n$.   We fix such an $\epsilon$, which establishes the domains of the terms on the $\theta$-fixed side.   

Fubini's theorem allows us to reverse the order of integration to express the first term on the $\theta$-fixed side as
\[ \int_F  \tilde{f} \begin{pmatrix} 0 & 2n \\ & 0 \end{pmatrix} \, \int_{\{h \in F : | h | \leq| \epsilon| \}} \omega\left( \begin{pmatrix} 1 & \frac{n}{h} \\ & 1 \end{pmatrix}, \mu\right) \frac{dh}{|h|} \, dn. \]
This expression is an integral over the Lie algebra $\mathfrak{n}$ of the unipotent radical $N$ with weight
\begin{align*}
	\int_{\{h \in F : | h | \leq| \epsilon| \}} \omega\left( \begin{pmatrix} 1 & \frac{n}{h} \\ & 1 \end{pmatrix}, \mu\right) \frac{dh}{|h|} &=  \int_{\{ h \in F : |n| \cdot | \varpi |^{\mu} \leq | h | \leq |\epsilon| \}} \frac{dh}{|h|} \\
						&= \left( 1 - |\varpi| \right) ( \nu_{\varpi}(n) + \mu - \nu_{\varpi}(\epsilon) + 1)
\end{align*}
when $\mu$ is bigger than the valuation of $\epsilon$ minus the valuation of $n$ (otherwise, this expression equals 0).  This asymptotic simplification for the weight factor provides an approximation for the $\theta$-fixed side.

\begin{definition}
Set $J_{+}(\hat{f}, \mu) = J_{+}(\hat{f}, \mu, \epsilon)$ equal to
\begin{align*}
\left( 1 - |\varpi| \right) \int_F  \tilde{f} \begin{pmatrix} 0 & 2n \\ & 0 \end{pmatrix} \, ( \nu_{\varpi}(n) + \mu - \nu_{\varpi}(\epsilon) + 1)\, dn +  \int_{\mathfrak{h}^c_{\epsilon}} \int_{H \backslash G}  \hat{f}(g^{-1}Xg)\, dg\, dX.
\end{align*}
This expression is defined for all $\epsilon \in F^{\times}$, but the local trace formula will only hold when $\epsilon$ is taken very small in a sense that depends on $f$. 
\end{definition}

Again, we claim that the difference between this approximation and the original expression tends to 0 as $\mu$ approaches infinity.  This will imply that the polynomial approximations we have derived for the $\theta$-split side and the $\theta$-fixed side are equal, which is the statement of a working version of the relative trace formula for $F^{\times} \backslash SL_2(F)$.  

\begin{prop}
For any compactly supported, locally constant function on $\mathfrak{g}$,
\[ \lim_{\mu \rightarrow \infty} \left( \text{$\theta$-fixed side} - J_{+}(\hat{f}, \mu) \right) = 0. \]
\end{prop}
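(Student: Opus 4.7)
The plan is to split both the $\theta$-fixed side and $J_+(\hat{f}, \mu)$ along the decomposition $\mathfrak{h} = \mathfrak{h}_\epsilon \sqcup \mathfrak{h}_\epsilon^c$ and show that the two resulting differences each tend to zero as $\mu \to \infty$. The untruncated integral over $\mathfrak{h}_\epsilon^c$ and the nilpotent integral in $J_+$ are paired with the two pieces of the $\theta$-fixed side in the natural way: the first difference will in fact vanish identically for $\mu$ sufficiently large, while the second will go to zero by the Lebesgue dominated convergence theorem.

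For the piece over $\mathfrak{h}_\epsilon$, the calculation carried out just before the definition of $J_+$ shows that this contribution to the $\theta$-fixed side equals
\[ \int_F \tilde{f}\begin{pmatrix} 0 & 2n \\ & 0 \end{pmatrix}\, W(n, \mu, \epsilon)\, dn, \]
where $W(n, \mu, \epsilon)$ equals $(1 - |\varpi|)(\nu_\varpi(n) + \mu - \nu_\varpi(\epsilon) + 1)$ when $\mu \geq \nu_\varpi(\epsilon) - \nu_\varpi(n)$ and is $0$ otherwise. Since $\hat{f}$ is compactly supported, so is $n \mapsto \tilde{f}\begin{pmatrix} 0 & 2n \\ & 0\end{pmatrix}$, yielding a uniform lower bound $N$ on $\nu_\varpi(n)$ over its support. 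For any $\mu \geq \nu_\varpi(\epsilon) - N$, the weight $W(n, \mu, \epsilon)$ agrees with the linear polynomial throughout the integration, so the difference between this piece and the first term of $J_+(\hat{f}, \mu)$ vanishes identically.

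For the piece over $\mathfrak{h}_\epsilon^c$, the difference between the $\theta$-fixed integral and the corresponding term of $J_+$ is
\[ \int_{\mathfrak{h}_\epsilon^c} \int_{H \backslash G} \hat{f}(g^{-1} X g) \left( \bar{\omega}(g, \mu) - 1 \right) dg\, dX. \]
For each fixed $g$, $\text{Cartan\,}\tau(g)$ is a fixed nonnegative integer, so $\bar{\omega}(g, \mu) = 1$ for all $\mu$ exceeding it, and the integrand converges pointwise to $0$. To apply dominated convergence, I would use the $\mu$-independent majorant $|\hat{f}(g^{-1}Xg)|$. Its iterated integral is finite because the compact support of $\hat{f}$ confines the $X$ contributing to the integrand to a bounded subset of $\mathfrak{h}_\epsilon^c$ that is, by definition, bounded away from the origin; on this regular, bounded, bounded-away-from-zero region, the orbital integral $X \mapsto \int_{H \backslash G} |\hat{f}(g^{-1}Xg)|\, dg$ is locally bounded by Harish-Chandra's results on semisimple orbital integrals for $p$-adic reductive Lie algebras \cite{HC70}, giving the required $L^1$ bound.

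The main technical point is this $L^1$ domination in the second piece, since on the full Lie algebra $\mathfrak{h}$ the orbital integral diverges logarithmically at the origin. Cutting off by $\mathfrak{h}_\epsilon$ is precisely the second truncation designed to avoid this singularity, which is why the argument goes through here without any further weighted-orbital-integral machinery of the kind required on the $\theta$-split side.
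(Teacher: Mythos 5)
Your proof is correct and takes essentially the same route as the paper: the same splitting of $\mathfrak{h}$ at the second truncation parameter $\epsilon$, the same identification of the $\mathfrak{h}_{\epsilon}$ contribution with the weighted nilpotent integral, and the same Harish-Chandra $L^1$ bound together with dominated convergence on $\mathfrak{h}_{\epsilon}^c$. The only (harmless) variation is that on the $\mathfrak{h}_{\epsilon}$ piece you note the difference vanishes identically once $\mu \geq \nu_{\varpi}(\epsilon) - N$, whereas the paper dominates it by a constant multiple of $\bigl| \tilde{f}\bigl(\begin{smallmatrix} 0 & 2n \\ 0 & 0 \end{smallmatrix}\bigr) \bigr|$ and applies dominated convergence there as well.
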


\begin{proof}
When $\mu$ exceeds the valuation of $\epsilon$ minus the valuation of $n$, 
\[ \int_{ \{ h \in F : |h | \leq| \epsilon| \}} \bar{\omega} \left( \begin{pmatrix} 1 & \frac{n}{2h} \\ & 1 \end{pmatrix} , \mu \right) dh =  \nu_{\varpi}(n) + \mu - \nu_{\varpi}(\epsilon) + 1 \]
and so the magnitude of the difference
\begin{align*}
	 & \tilde{f} \begin{pmatrix} 0 & 2n \\ & 0 \end{pmatrix} \, \left(  ( \nu_{\varpi}(n) + \mu - \nu_{\varpi}(\epsilon) + 1) - \int_{\{ h \in F : |h| \leq| \epsilon| \}} \bar{\omega} \left( \begin{pmatrix} 1 & \frac{n}{2h} \\ & 1 \end{pmatrix}, \mu \right) dh  \right) \   
\end{align*}
is bounded by a constant multiple of
\begin{align*}
 & \left| \tilde{f} \begin{pmatrix} 0 & 2n \\ & 0 \end{pmatrix} \,    \right|
\end{align*}
which is absolutely summable on $F$.   Analogously,
\begin{align*}
\left| \int_{H \backslash G}  \hat{f}(g^{-1}Xg)\, dg -   \int_{H \backslash G}  \hat{f}(g^{-1}Xg)\, \bar{\omega}(g, \mu)\, dg \right| & \leq \int_{H \backslash G} \left| \hat{f}(g^{-1}Xg)  \right| \, dg  
\end{align*}
which is absolutely summable on $\mathfrak{h}_{\epsilon}^c$, by results of Harish-Chandra in \cite{HC70}.  

Because these differences are bounded by absolutely summable functions that do not depend on $\mu$, we can apply the Lebesgue dominated convergence theorem to understand the behavior of this difference as $\mu$ tends to infinity.  Explicitly, the limit of the difference between the first term on the $\theta$-fixed side and the first term in its approximation $J_{+}(\hat{f}, \mu)$ is
\[ (1 - |\varpi|) \int_F \tilde{f} \begin{pmatrix} 0 & 2n \\ & 0 \end{pmatrix} \, \lim_{\mu \rightarrow \infty}  \left(  ( \nu_{\varpi}(n) + \mu - \nu_{\varpi}(\epsilon) + 1) - \int_{\{h \in F : |h| \leq |\epsilon|\}} \bar{\omega} \left( \begin{pmatrix} 1 & \frac{n}{2h} \\ & 1 \end{pmatrix}, \mu \right) dh  \right) \, dn \]
which equals zero.  Analogously, the difference between the second term on the $\theta$-fixed side and the second term in its approximation $J_{+}(\hat{f}, \mu)$ also tends to 0.  These limits, taken together, imply the limit of the proposition.
\end{proof}

\section{A Relative Trace Formula for $F^{\times} \backslash SL_2(F)$}

\label{6}

\begin{thm}
For any locally constant, compactly supported function $f$ on the Lie algebra $\mathfrak{g} = \mathfrak{sl}_2$, there is an $\epsilon$ that depends on $f$ for which there is an integral identity
\begin{align*}
	 & \frac{1 - |\varpi|}{2} \int_{\mathfrak{t}_1} |b| \int_{A \backslash G}  f\left(g^{-1} \begin{pmatrix} & b \\ b & \end{pmatrix} g \right)\,  \left( 1 + 2 \mu - 2 H_B(g) + 2 H_{\bar{B}}(g) \right) dg \, db \\
	 & \hspace{30ex} + \sum_{\gamma \in \mathcal{K} - \mathcal{K}'} \frac{1 - |\varpi|}{|\mu_4(F)|} \int_{\mathfrak{t}_{\gamma}}  |\gamma b|  \int_G  f\left(g^{-1} \begin{pmatrix} & b\gamma \\ b & \end{pmatrix} g \right)\, dg\, db  \\
&\hspace{5ex} = \left( 1 - |\varpi| \right) \int_F  \tilde{f} \begin{pmatrix} 0 & 2n \\ & 0 \end{pmatrix} \, ( \nu_{\varpi}(n) + \mu - \nu_{\varpi}(\epsilon) + 1)\, dn +  \int_{\mathfrak{h}^c_{\epsilon}} \int_{H \backslash G}  \hat{f}(g^{-1}Xg)\, dg\, dX.
\end{align*}
\end{thm}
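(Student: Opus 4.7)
The plan is to combine the exact equality of the $\theta$-split and $\theta$-fixed sides built into the Plancherel-style identity of section \hyperref[2]{2} with the two asymptotic approximations established as Propositions in sections \hyperref[4]{4} and \hyperref[5]{5}, and then to leverage the polynomial shape of those approximations in the truncation parameter $\mu$.

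First I would recall that the identity $\int_{\mathfrak{h}} \hat{f}(g^{-1}Xg)\, dX = \int_{\mathfrak{h}^{\perp}} f(g^{-1}Xg)\, dX$ of section \hyperref[2]{2}, after multiplying both sides by $\bar{\omega}(g, \mu)$ and integrating over $H \backslash G$, is an \emph{exact} equality between the $\theta$-fixed and $\theta$-split sides for every $\mu$; no approximation has been incurred. Next I would fix once and for all an $\epsilon \in F^{\times}$ small enough (depending only on $f$) that on the region $\mathfrak{h}_{\epsilon}$ one may replace the diagonal entry $h$ in the argument of $\tilde{f}$ by zero, as arranged in section \hyperref[5]{5}. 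Applying the two approximation propositions to the equality of sides then yields
\[ \lim_{\mu \to \infty} \bigl( J_{-}(f, \mu) - J_{+}(\hat{f}, \mu, \epsilon) \bigr) = 0. \]

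Finally, I would observe that both $J_{-}(f, \mu)$ and $J_{+}(\hat{f}, \mu, \epsilon)$ are affine functions of $\mu$: the weight $1 + 2\mu - 2 H_B(g) + 2 H_{\bar{B}}(g)$ in $J_{-}$ and the weight $\nu_{\varpi}(n) + \mu - \nu_{\varpi}(\epsilon) + 1$ in $J_{+}$ are each linear in $\mu$, while the remaining terms are $\mu$-independent. Any affine function of the positive integer $\mu$ that tends to $0$ as $\mu \to \infty$ must be identically zero, so the limit identity upgrades to the pointwise equality $J_{-}(f, \mu) = J_{+}(\hat{f}, \mu, \epsilon)$ for every $\mu$, which is precisely the integral identity asserted by the theorem. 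All the analytic content (dominated-convergence estimates against the Harish-Chandra and Waldspurger bounds on weighted orbital integrals, together with the case of Arthur's geometric lemma used in section \hyperref[4]{4}) has already been packaged into the two approximation propositions, so the final polynomial-matching step is purely formal; the one thing to be careful about---what I would regard as the main subtlety---is the order of quantifiers, namely that $\epsilon$ is selected from $f$ alone \emph{before} $\mu$ is sent to infinity, so that it functions as a genuine parameter of the formula rather than as something that could secretly depend on $\mu$.
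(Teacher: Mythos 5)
Your proposal is correct and takes essentially the same route as the paper's own proof: the exact equality of the $\theta$-split and $\theta$-fixed sides for every $\mu$, the two limit propositions of sections \hyperref[4]{4} and \hyperref[5]{5} giving $\lim_{\mu \to \infty}\bigl(J_{-}(f,\mu) - J_{+}(\hat{f},\mu)\bigr) = 0$, and the observation that both sides are polynomial (indeed affine) in $\mu$, forcing the identity $J_{-}(f,\mu) = J_{+}(\hat{f},\mu)$ for all $\mu$. Your remark that $\epsilon$ must be fixed from $f$ alone before taking $\mu \to \infty$ is consistent with the paper's choice of $\epsilon$ in section \hyperref[5]{5}.
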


\begin{proof}
This identity may be stated succinctly as $J_{-}(f, \mu) = J_{+}(\hat{f}, \mu)$.    We know that the $\theta$-fixed side equals the $\theta$-split side. By subtracting the limits
\begin{align*}
	\lim_{\mu \rightarrow \infty} \text{ $\theta$-fixed side } - J_{+}(\hat{f}, \mu) &= 0 \\
	\lim_{\mu \rightarrow \infty} \text{ $\theta$-split side } - J_{-}(f, \mu) &= 0
\end{align*}
one establishes this limit: 
\[\lim_{\mu \rightarrow \infty}  J_{+}(\hat{f}, \mu)  -  J_{-}(f, \mu) = 0.\]
But both $J_{+}(\hat{f}, \mu)$ and $ J_{-}(f, \mu) $ are polynomials in $\mu$, so this can only happen if
\[  J_{-}(f, \mu) =  J_{+}(\hat{f}, \mu) \]
for all $\mu$, which is the stated identity.
\end{proof}

This proposition gives the full trace formula for the symmetric space $F^{\times} \backslash SL_2(F)$, and one of the simplest non-trivial analogs of the local trace formula.   If we regard this expression as an identity of polynomials in $\mu$, then we may identify coefficients.   This yields two formulas:
\begin{align*}
	 \int_{\mathfrak{t}_1} |b|  &\int_{A_1 \backslash G}  f \left(g^{-1} \begin{pmatrix} & b \\ b & \end{pmatrix} g \right)\, dg\, db = \int_F  \tilde{f} \begin{pmatrix} 0 & 2n \\ & 0 \end{pmatrix} \, dn
\end{align*}
and
\begin{align*}
	 \sum_{\gamma \in \mathcal{K} - \mathcal{K}'}&  \frac{1}{|\mu_4(F)|} \int_{\mathfrak{t}_{\gamma}}  |\gamma b|  \int_G  f\left(g^{-1} \begin{pmatrix} & b\gamma \\ b & \end{pmatrix} g \right)\, dg\, db \\ & \hspace{10ex} + \frac{1}{2} \int_{\mathfrak{t}_1} |b| \int_{A_1 \backslash G}  f\left(g^{-1} \begin{pmatrix} & b \\ b & \end{pmatrix} g \right)\,   (1 - 2 H_B(g) + 2 H_{\bar{B}}(g))\, dg \, db \\
&= \int_F  \tilde{f} \begin{pmatrix} 0 & 2n \\ & 0 \end{pmatrix} \, ( 1 + \nu_{\varpi}(n) - \nu_{\varpi}(\epsilon))\, dn + \frac{1}{1 - |\varpi|} \int_{\mathfrak{h}^c_{\epsilon}} \int_{H \backslash G}  \hat{f}(g^{-1}Xg)\, dg\, dX.
\end{align*}
The first of these formulas can be derived from the Plancherel formula on $\mathfrak{t}_1$, applied to the function
\[  \hat{f}^{(B')}(x) := \int_F  \tilde{f} \begin{pmatrix} x & 2n \\ & -x \end{pmatrix} \, dn. \]
The second identity is roughly equivalent to the full trace formula for $F^{\times} \backslash SL_2(F)$, and is the integral identity of proposition 1.1.

\bibliography{SL2-fork}

\begin{thebibliography}{1}

\bibitem{Art91}
J.~Arthur.
\newblock {\em A Local Trace Formula}.
\newblock Inst. Hautes \'{E}tudes Sci. Publ. Math. No. 73, 5--96., 1991.

\bibitem{HC70}
Harish-Chandra.
\newblock {\em Harmonic Analysis on Reductive p-adic Groups. Notes by G. van
  Dijk}.
\newblock Lecture Notes in Mathematics, No. 162. Springer-Verlag, 1970.

\bibitem{Hel78}
S.~Helgason.
\newblock {\em Differential Geometry, Lie Groups and Symmetric Spaces}.
\newblock Academic Press, 1978.

\bibitem{CPS90}
I.~Piatetski-Shapiro J.~W.~Cogdell.
\newblock {\em The Arithmetic and Spectral Analysis of Poincar\'{e} Series}.
\newblock Volume 13 of Perspectives in Mathematics. Academic Press Inc.,
  Boston, MA, 1991.

\bibitem{Jac04}
H.~Jacquet.
\newblock {\em A Guide to the Relative Trace Formula}.
\newblock Automorphic Representations, L-functions and Applications: Progress
  and Prospects, Ohio State University Mathematical Research Institute
  Publications, Volume 11 (De Gruyter, Berlin, 2005), Columbus, OH, 2003.

\bibitem{JLR93}
H.~Jacquet K.~F. Lai and S.~Rallis.
\newblock {\em A Trace Formula for Symmetric Spaces}.
\newblock Duke Math. J., 70(2):305-372, 1993.

\bibitem{Ric82}
R.~W. Richardson.
\newblock {\em Orbits, Invariants, and Representations Associated to
  Involutions of Reductive Groups}.
\newblock Invent. Math. 66, no. 2, 287-312, 1982.

\bibitem{Wal95}
J-L. Waldspurger.
\newblock {\em Une Formule des Traces Locale pour les Alg\`{e}bres de Lie
  p-adiques}.
\newblock J. Reine Angew. Math. 465, 41-99, 1995.

\end{thebibliography}
\bibliographystyle{plain}

\vspace{1ex}
\scriptsize
Department of Mathematics, 
University of Chicago,  
USA
\hspace{30ex} sparling@math.uchicago.edu


\end{document}